\newcommand{\UP}{\blacktriangle}               
\newcommand{\Down}{\triangledown}               
\theoremstyle{plain}
\newtheorem{theorem}{Theorem}
\newtheorem{proposition}[theorem]{Proposition}
\newtheorem{lemma}[theorem]{Lemma}
\theoremstyle{definition}
\newtheorem{definition}[theorem]{Definition}
\newtheorem{example}[theorem]{Example}
\newtheorem{remark}[theorem]{Remark}
\begin{document}

%opening
\title[Representing Distributive Lattices with Galois Connections in Terms of Rough Sets]%
{Representing Expansions of Bounded Distributive Lattices with Galois Connections in Terms of Rough Sets}

\author[W.~Dzik]{Wojciech Dzik}
\address{W.~Dzik, Institute of  Mathematics, University of Silesia, ul.~Bankowa 12, \mbox{40-007 Katowice}, Poland}
\email{wojciech.dzik@us.edu.pl}

\author[J.~J{\"a}rvinen]{Jouni J{\"a}rvinen}
\address{J.~J{\"a}rvinen, Sirkankuja 1, 20810~Turku, Finland}
\email{Jouni.Kalervo.Jarvinen@googlemail.com}

\author[M.~Kondo]{Michiro Kondo}
\address{M.~Kondo, School of Information Environment, Tokyo Denki University, Inzai, 270-1382, Japan}
\email{mkondo@mail.dendai.ac.jp}

\begin{abstract}
This paper studies expansions of bounded distributive lattices equipped with a Galois connection. 
We introduce GC-frames and canonical frames for these algebras.
The complex algebras of GC-frames are defined in terms of rough set approximation operators. 
We prove that each bounded distributive lattice with a Galois connection can be 
embedded into the complex algebra of its canonical frame. 
We show that for every spatial Heyting algebra $L$ equipped with a Galois
connection, there exists a GC-frame such that $L$ is isomorphic to the complex algebra of this 
frame, and an analogous result holds for weakly atomic Heyting--Brouwer algebras with a Galois connection. 
In each case of representation, given Galois connections are represented by rough set upper and lower approximations.
\end{abstract}

\maketitle

\section{Introduction}

The theory of rough sets introduced by Z.~Pawlak \cite{Pawl82} can be seen as an extension of 
the classical set theory. Key idea of rough set theory is that our knowledge about 
objects of a  given universe of discourse $U$ may be inadequate or incomplete in the
sense that the objects in $U$ are observed only within restrictions of an indiscernibility relation.
According to the Pawlak's original definition, an indiscernibility relation $E$ on $U$ 
is an equivalence interpreted so that two elements of $U$ are $E$-related if 
they cannot be distinguished by their properties. Since there is one-to-one 
correspondence between equivalences and partitions, each indiscernibility relation 
induces a partition on $U$. In this sense,  our ability to distinguish objects 
is understood to be blurred -- we cannot distinguish individual objects, only 
their equivalence classes. 

Several studies on rough set approximation maps that are determined by binary relations reflecting 
distinguishability or indistinguishability of the elements of the universe of discourse can be found in 
the literature (see e.g. \cite{Jarv07} for further references).
For instance, E.~Or{\l}owska and Z.~Pawlak introduced in \cite{OrlPaw84} many-valued 
information systems in which each attribute attaches a set of values to objects. 
Therefore, in many-valued information systems, it is possible to express, 
for example, similarity, informational inclusion, diversity, and orthogonality  
in terms of binary relations. The idea therefore is that $R$ may be an arbitrary binary
relation, and rough upper and lower approximations are then defined in terms of $R$. Notice that
axiomatic systems of lower and upper approximation operators in rough set theory in a general setting of 
ideals of rings are presented in \cite{Hooshmandasl2013}.

Let $R$ be any binary relation on the universe $U$ and let $A \subseteq U$. 
The \textit{upper approximation} $A^\UP$ of $A$ is defined by
\[
x \in A^\UP \iff (\exists y \in U) \,  x \, R \, y \text{ and } y \in A, 
\]
and the \textit{lower approximation} $A^\Down$ of $A$ is specified by the condition:
\[
x \in A^\Down \iff (\forall y \in U) \, y \, R \, x \text{ implies } y \in A .
\]

In the next example, we show how approximation operators are interpreted with
respect to different kinds of relations.
\begin{example}
\begin{enumerate}[(a)]
\item Let $R$ be a similarity relation, that is, $x \, R \, y$ means
that the object $x$ is similar to $y$. Usually, it is assumed that
similarity relations are reflexive and symmetric, that is, so-called \emph{tolerances}. 
Now $x \in A^\UP$ if there exists an object in $A$ similar to $x$, and
if $x \in A^\Down$, then all objects that are similar to $x$ must be in $A$.
Therefore, $A^\UP$ and $A^\Down$ can be viewed as sets of elements \emph{possibly}
and \emph{certainly} belonging to $A$.

\item Let $R$ be a preference relation such that $x \, R \, y$ means that $x$ is preferred over $y$.
Obviously, preference relations are not symmetric, but mostly they are assumed to be transitive.
Again, $A^\UP$ and $A^\Down$ may be interpreted as the sets of elements belonging possibly and
certainly to $A$, respectively. If $x \in A^\UP$, then $x \, R \, y$ for some $y \in A$, that is,
$x$ is preferred over at least one element in $A$. If $x \in A^\Down$, then $y \, R \, x$
implies $y \in A$. Thus, if $y$ is preferred to some element $x$ in $A^\Down$, then $y$ is in $A$.
\end{enumerate}
\end{example}

In this work particularly Galois connections play a central role. 
They are pairs $(f,g)$ of maps closely related to each other.
Galois connections on residuated lattices are studied in \cite{Belohlavek2009}.
It is known that the pair $({^\UP},{^\Down})$ of rough approximation operators forms
an order-preserving Galois connection for any binary relation. Note that also
in formal concept analysis, Galois connections play an essential role. 
Axiomatic characterizations of concept lattices and dual concept lattices are studied in
\cite{Ma2013}, and in \cite{Shen2013}, $L$-contexts, in which the relation between objects and attributes are
$L$-relations instead of binary relations, are considered.

In this study, we consider representations of bounded distributive lattices, Heyting algebras, 
and Heyting--Brouwer algebras equipped with an arbitrary Galois connection in terms of rough 
set operators and  Alexandrov topologies.
The well-known theorem by M.~H.~Stone says that every Boolean algebra $B$ can be extended to a 
complete and atomic Boolean algebra, and that this embedding in question is an isomorphism if the algebra $B$
itself is atomic. This result was generalized by B.~J{\'o}nsson and A.~Tarski
\cite{jonsson1951boolean} to apply also to Boolean algebras with operators. Our work shows that this 
idea works also for Heyting and Heyting--Brouwer algebras with a Galois connection pair. 
We show that any Heyting algebra provided with a Galois connection (an HGC-algebra, for short) can be
be extended to a spatial HGC-algebra. Thus, spatial is a generalization of atomic
that applies in Boolean case. Additionally, we prove that each Heyting--Brouwer algebra with a Galois 
connection (i.e.\@ an HBGC-algebra) may be extended to a weakly atomic HBGC-algebra.

This paper is structured as follows. Section~\ref{Sec:Preliminaries} recalls well-known
facts about Heyting and Heyting--Brouwer algebras, Alexandrov topologies, and 
order-preserving Galois connections. In Section~\ref{Sec:BDLGC}, we consider
bounded distributive lattices equipped with a Galois connections (BDLGC-algebras, for 
short). We introduce GC-frames, canonical frames of BDLGC-algebras, and 
complex algebras of GC-frames. We prove that each BDLGC-algebra can be
embedded into the complex algebra of its canonical frame. Section~\ref{Sec:HGC+HBGC}
is devoted to Heyting and Heyting-Brouwer algebras with Galois connections.
Our main results show that for every spatial HGC-algebra $\mathbb{H}_{\rm GC}$, 
there exists a GC-frame $\mathcal{F}$ such that $\mathbb{H}_{\rm GC}$
is isomorphic to the complex algebra of the frame $\mathcal{F}$. Additionally,
for any weakly atomic HBGC-algebra $\mathbb{HB}_{\rm GC}$, 
there exists a GC-frame $\mathcal{F}$ such that $\mathbb{HB}_{\rm GC}$
is isomorphic to the complex algebra of the frame $\mathcal{F}$.

\section{Preliminaries} \label{Sec:Preliminaries}

A \emph{Heyting algebra} $L$ is a bounded lattice such that for all $a,b \in L$, there is a 
greatest element $x$ of $L$ with $a \wedge x \leq b$. This element is the \emph{relative pseudocomplement} 
of $a$ with respect to $b$, and is denoted $a \to b$. A Heyting algebra $L$ can be viewed either 
as a partially ordered set $(L,\leq)$, because
the operations $\vee$, $\wedge$, $\to$, $0$, $1$ are uniquely determined by the
order $\leq$, or as an algebra $(L, \vee, \wedge, \to, 0, 1)$ of type $(2,2,2,0,0)$.
A \emph{Heyting--Brouwer algebra} is a Heyting algebra equipped with the operation $\gets$
of \emph{co-implication}, that is, for all $a,b \in L$, there is a
least element $x$ such that $b \leq a \vee x$. Heyting--Brouwer algebras can be
considered as algebras $(L,\vee,\wedge,\to,\gets, 0,1)$ of type $(2,2,2,2,0,0)$

A \emph{topological space} consists of a set $X$ together with a collection 
$\mathcal{T}$ of subsets of $X$, called \emph{open sets}, such that (i) the empty set 
and $X$ are open, (ii) any union of open sets is open, and (iii) the intersection 
of any finite number of open sets is open. The collection $\mathcal{T}$ of open sets 
is called a \emph{topology} on $X$. Each topology $\mathcal{T}$ defines a Heyting algebra
\[ (\mathcal{T}, \cup, \cap, \to, \emptyset, X),\]
where for all $A,B \in \mathcal{T}$, the operation $\to$ is defined by $A \to B = \mathcal{I}(-A \cup B)$; here
$\mathcal{I}$ is the interior operator of $\mathcal{T}$ and $-A$ denotes the set-theoretical
complement $X \setminus A$ of $A$. Note that $\mathcal{I}(C) = \bigcup \{ X \in \mathcal{T} \mid X \subseteq C \}$
for any $C \subseteq X$. A \emph{base} $\mathcal{B}$ for a topology $\mathcal{T}$ is a collection of open sets 
such that every set of $\mathcal{T}$ can be expressed as a union of elements of $\mathcal{B}$.

An \emph{Alexandrov topology} $\mathcal{T}$ on $X$ is a topology in which also intersections of 
open sets are open, or equivalently, every point $x \in X$ has the \emph{least neighbourhood} $N(x) \in \mathcal{T}$.
For an Alexandrov topology $\mathcal{T}$, the least neighbourhood  of $x$ is
$N(x)= \bigcap \{ B \in \mathcal{T} \mid x \in B \}$. Additionally, $\{ N(x) \mid x \in X\}$ 
forms the \emph{smallest base} of $\mathcal{T}$, and for all $B \in \mathcal{T}$, 
$B = \bigcup_{x \in B} N(x)$. It is also known that this is the set of completely 
join-irreducible elements of $\mathcal{T}$ \cite{Alex37,Birk37}.

For an Alexandrov topology $\mathcal{T}$ on $X$,  we may define a quasiorder (reflexive and transitive relation)
$\leq_\mathcal{T}$ on $X$ by $x \, \leq_\mathcal{T} \, y$ if and only if $y \in N(x)$  for all $x, y \in X$.
On the other hand, let $\leq$ be a quasiorder on $X$. The set of all $\leq$-closed subsets of $X$ forms an Alexandrov topology
$\mathcal{T}_\leq$, that is, $B \in \mathcal{T}_\leq$ if and only if $x \in B$ and $x \leq y$ imply $y \in B$. 
Let us denote by ${\uparrow} x$ the set of $\leq$-successors of $x$, that is, ${\uparrow} x = \{ y \in X \mid x \leq y \}$.
In $\mathcal{T}_\leq$, $N(x) = {\uparrow} x$ for any $x \in X$.  
The correspondences $\mathcal{T} \mapsto {\leq_\mathcal{T}}$ and ${\leq} \mapsto \mathcal{T}_\leq$ are
mutually invertible bijections between the classes of all Alexandrov topologies and of all quasiorders on the set $X$. 

A lattice $L$ is \emph{weakly atomic} if, given $x < y$ in $L$, there
exist $a,b \in L$ such that $x \leq a \prec b \leq y$, where $\prec$ denotes
the covering relation of $L$.
A complete lattice $L$ satisfies the \emph{join-infinite distributive law} if for any $S \subseteq L$ and $x \in L$,
\begin{equation*}\label{Eq:JID} \tag{JID}
x \wedge \big ( \bigvee S \big ) = \bigvee \{ x \wedge y \mid y \in S \}.
\end{equation*}
The dual condition is the \emph{meet-infinite distributive law}, (MID).
It is well known that a complete lattice is a Heyting algebra if and only if it satisfies (JID).
Clearly, a complete lattice is a Heyting--Brouwer algebra if and only if it satisfies both
(JID) and (MID).

An element $a$ of a complete lattice $L$ is called \emph{completely join-irreducible} if $a = \bigvee S$
implies $a \in S$ for every subset $S$ of $L$. Note that the least element $0$ is not 
completely join-irreducible, because $0 = \bigvee \emptyset$, and $\emptyset$ has no elements. 
A complete lattice $L$ is \emph{spatial} if for each
$a \in L$,
\[ a = \bigvee \{ j \in \mathcal{J} \mid j \leq a \}, \]
where $\mathcal{J}$ is the set of completely join-irreducible elements of $L$. Note that 
in the literature can be found such definitions of spatiality that $L$ is not required
to be a complete lattice, but just a lattice. For instance, the set of real numbers $\mathbb{R}$ 
is spatial lattice in this sense, because every element of $\mathbb{R}$ is a join of completely 
join-irreducible  elements below it, but $\mathbb{R}$ is not a complete lattice. 

An element $x$ of a complete lattice $L$ is said to be \emph{compact} if, for every subset
$S$ of $L$, 
\[ x \leq \bigvee S \Longrightarrow x \leq \bigvee F \text{ for some finite subset $F$ of $S$}. \]
Let us denote by $K(L)$ the set of compact elements of $L$.
A complete lattice $L$ is said to be \emph{algebraic} if for each $a \in L$,
\[ a  = \bigvee \{ x \in K(L) \mid x \leq a\}.\]  
It is well known that any distributive algebraic lattice satisfies (JID), and thus
forms a Heyting algebra. Note also that every algebraic lattice is weakly atomic \cite{DaPr02}.

\begin{example} \label{Ex:Finite}
Let $L$ be a finite lattice. Then, $L$ is algebraic, because trivially each element 
of $L$ is compact. Thus, $L$ is also weakly atomic. Let $J(L)$ be the set of all \emph{join-irreducible}
elements of the lattice $L$, that is, elements $a \in L$ such that
$a \neq 0$ and $a = b \vee c$ implies $a = b$ or $a = c$. Because $L$ is
finite, $J(L)$ coincides with the set of completely join-irreducible elements.
For all $x \in L$,
\[ x = \bigvee \{ j \in J(L) \mid j \leq x \}, \]
that is, $L$ is spatial (see e.g.~\cite{DaPr02}).
It is also clear that if $L$ is distributive, it satisfies (JID) and (MID).
\end{example}

For an Alexandrov topology $\mathcal{T}$ on $X$, the complete lattice $(\mathcal{T},\subseteq)$ 
is algebraic and spatial, and the set of completely join-irreducible elements of $\mathcal{T}$ is $\{N(x) \mid x \in X\}$.
Additionally, each Alexandrov topology is completely distributive,
that is, arbitrary joins distribute over arbitrary meets. Thus, every Alexandrov topology
determines a Heyting--Brouwer lattice. In fact, the following result is presented by
C.~Rauszer in \cite{Rauszer74}.

\begin{proposition}\label{Prop:Rauszer}
Let $\leq$ be a quasiorder on $X$. If we define
\begin{align*}
A \to B    &= \{ a \in X \mid (\forall b \geq a) \, b \in A \text{ implies } b \in B \}, \\
A \gets B  &= \{ a \in X \mid (\exists b \leq a) \, b \notin A \text{ and }  b \in B \}, 
\end{align*}
then $(\mathcal{T}_\leq,\cup,\cap,\to,\gets,\emptyset, X)$ is a Heyting--Brouwer algebra.
\end{proposition}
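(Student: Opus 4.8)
The plan is to work directly with the adjunction characterizations of the relative pseudocomplement and the co-implication, rather than re-deriving existence from the (JID)/(MID) criterion. From the preceding discussion I take for granted that $(\mathcal{T}_\leq,\subseteq)$ is a complete lattice whose meet and join are ordinary intersection and union, with least element $\emptyset$ and greatest element $X$. It then remains only to show that the two displayed formulas compute, respectively, the greatest $x$ with $A \cap x \subseteq B$ and the least $x$ with $B \subseteq A \cup x$. Throughout I would use repeatedly that $\leq$ is reflexive and transitive and that every member of $\mathcal{T}_\leq$ is $\leq$-closed.

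First I would treat the implication. I would check that $A \to B$ is itself $\leq$-closed: if $a \in A \to B$ and $a \leq c$, then any $b \geq c$ satisfies $b \geq a$ by transitivity, so $b \in A$ forces $b \in B$, giving $c \in A \to B$. The core step is then the adjunction: for every $C \in \mathcal{T}_\leq$, $C \subseteq A \to B$ if and only if $A \cap C \subseteq B$. For the forward direction I would take $x \in A \cap C$, so $x \in A \to B$, and instantiate the universal quantifier at $b = x$ (legitimate by reflexivity) to conclude $x \in B$. For the converse I would take $a \in C$ and $b \geq a$ with $b \in A$; since $C$ is $\leq$-closed, $b \in C$, whence $b \in A \cap C \subseteq B$. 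Specializing the equivalence to $C = A \to B$ shows $A \cap (A \to B) \subseteq B$, while the converse direction shows $A \to B$ dominates every admissible $C$; hence it is the relative pseudocomplement.

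The co-implication is the order-dual story, which I would carry out symmetrically. I would first show $A \gets B \in \mathcal{T}_\leq$ (a witness $b \leq a$ for $a$ also witnesses any $c \geq a$, by transitivity), and then establish the dual adjunction: for every $C \in \mathcal{T}_\leq$, $A \gets B \subseteq C$ if and only if $B \subseteq A \cup C$. Here the key moves are, for one direction, that a witness $b \in B \setminus A$ for membership $a \in A \gets B$ lands in $C$ whenever $B \subseteq A \cup C$, and is then pushed up to $a$ by closedness; and for the other, that any $b \in B \setminus A$ witnesses its own membership in $A \gets B$ via reflexivity. Specializing again yields that $A \gets B$ is the least $x$ with $B \subseteq A \cup x$.

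I expect the only delicate points to be bookkeeping ones: remembering to confirm that both defined sets actually lie in $\mathcal{T}_\leq$, so that they are candidates for the lattice operations at all, and being careful that each adjunction is stated with $C$ ranging over $\leq$-closed sets, since closedness of $C$ is exactly what makes the ``push a witness up or down the order'' steps go through. Reflexivity supplies the trivial witnesses ($b = a$) that pin down the equivalences, while transitivity supplies closure of the defined sets. No (JID)/(MID) computation is needed on this route, although one could alternatively invoke the complete distributivity of $\mathcal{T}_\leq$ to obtain the existence of $\to$ and $\gets$ abstractly and then merely match the formulas.
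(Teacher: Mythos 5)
Your argument is correct and complete. Note that the paper itself supplies no proof of this proposition: it observes beforehand that every Alexandrov topology is completely distributive, hence satisfies (JID) and (MID) and so abstractly forms a Heyting--Brouwer lattice, and then cites Rauszer for the statement that the two displayed formulas compute $\to$ and $\gets$. Your route is the direct one: you verify that each defined set is $\leq$-closed (transitivity of $\leq$ is exactly what is needed) and then establish the two adjunctions $C \subseteq A \to B \iff A \cap C \subseteq B$ and $A \gets B \subseteq C \iff B \subseteq A \cup C$ for $C$ ranging over $\mathcal{T}_\leq$, with reflexivity furnishing the trivial witnesses and closedness of $C$ the transport of witnesses along the order. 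Specializing each equivalence at the defined set itself then identifies $A \to B$ as the greatest $C$ with $A \cap C \subseteq B$ and $A \gets B$ as the least $C$ with $B \subseteq A \cup C$, which is precisely the definition of a Heyting--Brouwer algebra used in the paper. This buys a self-contained, elementary verification that simultaneously proves existence and confirms the explicit formulas, whereas the paper's route gets existence for free from complete distributivity but leaves the matching of formulas to the cited reference; your closing remark correctly identifies that alternative. The only small point worth making explicit in a written version is that $\mathcal{T}_\leq$ is closed under arbitrary unions and intersections (so that $\cup$ and $\cap$ really are the lattice operations and $\emptyset, X$ the bounds), which you rightly take from the preceding discussion.
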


In the next remark, we present some conditions under which a lattice is isomorphic to some 
Alexandrov topology (see \cite{DaPr02}, for instance).

\begin{remark} \label{Rem:AlexandrovIsomorphism}
Let $L$ be a lattice. Then, the following are equivalent:
\begin{enumerate}[\rm (a)]
\item $L$ is isomorphic to an Alexandrov topology;
\item $L$ is distributive, and $L$ and its dual $L^{\rm dual}$ are algebraic;
\item $L$ is completely distributive and $L$ is algebraic;
\item $L$ is spatial and satisfies (JID);
\item $L$ is complete, satisfies (JID) and (MID), and is weakly atomic.
\end{enumerate}
\end{remark}
By this remark it is clear that if a complete lattice $L$ forms a Heyting algebra such
that the underlying lattice is spatial, then it is order-isomorphic to some Alexandrov topology, 
because $L$ satisfies (JID). Additionally,
if a complete lattice $L$ determines a  Heyting--Brouwer algebra and $L$ is
weakly atomic, then there exists an Alexandrov topology order-isomorphic to $L$. 
By Example~\ref{Ex:Finite}, finite distributive lattices satisfy conditions (a)--(e).

\begin{example} \label{Exa:Counter}
Complete lattices that are distributive and spatial may \emph{not} be algebraic. Let
$L = (\omega \times \omega) \cup \{\infty\}$, where $\omega$ is the chain of 
all natural numbers $\{0,1,2,\ldots\}$, the set $\omega \times \omega$ is ordered
coordinatewise, and $\infty$ is a new largest element. Then $L$ is a distributive,
complete, and spatial lattice. The completely join-irreducible elements of $L$
are the pairs $(n, 0)$ and $(0, n)$ for any positive integer $n$. 
As we already noted, any distributive algebraic lattice satisfies (JID).
However, $L$ does not satisfy (JID) since $(1, 0) \wedge \bigvee \{ (0, n) \mid n \leq \omega \} = (1, 0)$, 
because  $\bigvee \{ (0, n) \mid n < \omega \} = \infty$, while each $(1, 0) \wedge (0, n) = (0, 0)$.
Thus, the distributive lattice $L$ cannot be algebraic.
 
\end{example}

For two ordered sets $P$ and $Q$, a pair 
$(f,g)$ of maps $f \colon P \to Q$ and $g \colon Q \to P$ 
is called a  \emph{Galois connection}  between $P$ and $Q$
if for all $p \in P$ and $q \in Q$,
\[
f(p) \leq q \iff p \leq g(q).
\]

\begin{lemma} \label{Lem:GaloisCharacterization}
Let $f \colon P \to Q$ and $g \colon Q \to P$.
The pair $(f,g)$ is a Galois connection if and only if 
\begin{enumerate}[\rm (a)]
\item $p \leq (g \circ f)(p)$ for all $p \in P$ and $(f \circ g)(q) \leq q$ for all $q \in Q$;
\item the maps $f$ and $g$ are  order-preserving.
\end{enumerate}
\end{lemma}

Note that two ways of defining Galois connections can be found in the literature -- 
the one above in which the maps are order-preserving, and the other one in which 
they are order-reversing.
The two definitions are  equivalent, because if $(f, g)$ is a Galois connection 
between $P$ and $Q$ in one sense, then $(f, g)$ is a Galois connection between 
$P$ and $Q^{\rm dual}$ in the other sense. The following proposition lists 
some well-known properties of Galois connections.

\begin{proposition}\label{Prop:GaloisProperty}
Let $(f,g)$ be a Galois connection between two ordered sets $P$ and $Q$.
\begin{enumerate}[\rm (a)] 
\item $f \circ g \circ f = f$ and $g \circ f \circ g = g$. 

\item The map $f$ preserves all existing joins and $g$ preserves all existing meets. 

\item The maps $f$ and $g$ uniquely determine each other by the equalities
    \[ f(p) = \bigwedge \{ q \in Q \mid p \leq g(q)\} \text{ \ and \ }
    g(q) = \bigvee   \{ p \in P \mid f(p) \leq q\} .\] 
\end{enumerate}
\end{proposition}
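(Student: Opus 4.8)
The plan is to derive all three parts from Lemma~\ref{Lem:GaloisCharacterization}, using repeatedly the defining equivalence $f(p) \leq q \iff p \leq g(q)$ together with the order-preservation of $f$ and $g$ and the two inequalities $p \leq (g\circ f)(p)$ and $(f\circ g)(q) \leq q$. Since $P$ and $Q$ are genuine posets (hence antisymmetric), every equality I establish will be obtained from a matching pair of opposite inequalities.

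For part (a), I would prove $f\circ g\circ f = f$ by sandwiching $(f\circ g\circ f)(p)$ between two copies of $f(p)$. Applying the order-preserving map $f$ to the unit inequality $p \leq (g\circ f)(p)$ gives $f(p) \leq (f\circ g\circ f)(p)$, while the counit inequality $(f\circ g)(q)\leq q$ instantiated at $q = f(p)$ gives $(f\circ g\circ f)(p) \leq f(p)$. Antisymmetry then yields the equality for every $p$. The identity $g\circ f\circ g = g$ follows from the dual sandwich, applying $g$ to the counit inequality and instantiating the unit inequality at $p = g(q)$.

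For part (b), let $S\subseteq P$ have a join $\bigvee S$ in $P$; I would show that $f(\bigvee S)$ is the least upper bound of $\{f(s)\mid s\in S\}$, which simultaneously establishes that this join exists and that it equals $f(\bigvee S)$. It is an upper bound because $s\leq \bigvee S$ forces $f(s)\leq f(\bigvee S)$. It is least because, for any upper bound $q$ of $\{f(s)\mid s\in S\}$, the equivalence gives $s\leq g(q)$ for all $s$, so $g(q)$ bounds $S$, whence $\bigvee S\leq g(q)$ and, applying the equivalence once more, $f(\bigvee S)\leq q$. The assertion that $g$ preserves existing meets is the order-dual, obtained by reversing all inequalities and interchanging the roles of $f$ and $g$.

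For part (c), I would observe that $f(p)$ is simultaneously a member of the set $\{q\in Q\mid p\leq g(q)\}$ and a lower bound of it, hence its minimum and therefore its meet. Membership is precisely the unit inequality $p\leq (g\circ f)(p)$, and $f(p)$ is a lower bound since $p\leq g(q)$ implies $f(p)\leq q$ by the defining equivalence. The formula for $g(q)$ is again the order-dual. Because each formula expresses one map purely in terms of the other together with the order, the two maps determine each other uniquely. The only point needing slight care---and the closest thing here to an obstacle---is that, $P$ and $Q$ being arbitrary posets, suprema and infima need not exist a priori; this is handled by the fact that in (b) I exhibit $f(\bigvee S)$ as the required join and in (c) the meet is attained at $f(p)$, so existence is never assumed but always produced.
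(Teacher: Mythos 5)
Your proof is correct. The paper states this proposition without proof, citing it as a list of well-known properties, and your argument is the standard derivation from the defining equivalence $f(p)\leq q \iff p\leq g(q)$ together with Lemma~\ref{Lem:GaloisCharacterization} (sandwiching for (a), exhibiting $f(\bigvee S)$ as the least upper bound for (b), and identifying $f(p)$ as the minimum of $\{q\mid p\leq g(q)\}$ for (c)); in particular you correctly handle the existence issue by producing the joins and meets rather than assuming them.
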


\section{Rough Sets and Bounded Distributive Lattices with a Galois Connection}
\label{Sec:BDLGC}

A \emph{bounded distributive lattice with a Galois connection}, or a \emph{BDLGC-algebra} for short, 
$\mathbb{BDL}_{\rm GC}= (L,\vee ,\wedge, f, g, 0, 1)$ is a bounded distributive lattice 
$(L,\vee,\wedge,0, 1)$ equipped with two maps  $f, g \colon L \to  L$  forming an order-preserving
Galois connection.

\begin{definition} \label{GC-frame}
A \emph{Galois connection-frame} (or a \emph{GC-frame}) $\mathcal{F} = (X, \leq, R)$  is  a quasiordered set 
$(X, \leq)$ equipped with a relation $R \subseteq X \times X$ satisfying the following condition:
\begin{equation}\label{Eq:R}\tag{CR}
x \le x', x \, R \, y, \text{ and } y' \le y  \text{ imply } x' \, R \, y'.
\end{equation}
Equivalently, condition \eqref{Eq:R} can be expressed as: ${\geq} \circ R \circ {\geq} \subseteq R$.
\end{definition}
 
Given a BDLGC-algebra $\mathbb{BDL}_{\rm GC}= (L,\vee, \wedge, f, g,0, 1)$,  its \emph{canonical frame} 
$\mathcal{F}_{X(L)} = (X(L), \subseteq, R)$  consists of the set $X(L)$ of all prime filters of $L$,
and the relation $R$ is defined for any $F,G \in X(L)$ by
\begin{align*}
\label{Def:R}\tag{$\star$}
 F \, R \, G &\text{ \ iff \ }   (\forall a \in L) \,  a \in G \implies f(a) \in F \\
             &\text{ \ iff \ }   (\forall a \in L) \,  g(a) \in G \implies y \in F.
\end{align*}

\begin{lemma} \label{Lem:CanonicalFrame}
For a BDLGC-algebra $\mathbb{BDL}_{\rm GC}= (L, \vee, \wedge, f, g, 0, 1)$,  its canonical frame 
$\mathcal{F}_{X(L)} = (X(L), \subseteq, R)$ is a GC-frame.
\end{lemma}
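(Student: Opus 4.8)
The plan is to verify directly the two defining requirements of a GC-frame from Definition~\ref{GC-frame}. First I would observe that inclusion $\subseteq$ restricted to the set $X(L)$ of prime filters is reflexive and transitive, being ordinary set containment, so $(X(L), \subseteq)$ is automatically a quasiorder (in fact a partial order). This disposes of the ordered-set part of the definition with no work and leaves the compatibility condition \eqref{Eq:R} as the only substantive claim.

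For \eqref{Eq:R}, translated into the present setting, I must show that whenever $F \subseteq F'$, $F \, R \, G$, and $G' \subseteq G$ hold for prime filters $F, F', G, G'$, then $F' \, R \, G'$ holds. Unwinding the defining clause \eqref{Def:R}, the goal $F' \, R \, G'$ asks that $a \in G'$ implies $f(a) \in F'$ for every $a \in L$. The argument is then a short chase through the hypotheses: given $a \in G'$, the inclusion $G' \subseteq G$ yields $a \in G$; the assumption $F \, R \, G$ then gives $f(a) \in F$; and finally $F \subseteq F'$ promotes this to $f(a) \in F'$. Since $a$ was arbitrary, $F' \, R \, G'$ follows, which is exactly \eqref{Eq:R}.

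The point worth emphasising is that \eqref{Eq:R} is forced purely by the monotone shape of the defining clause \eqref{Def:R}: the relation $R$ is built so that enlarging the first filter and shrinking the second can only make the implication easier to satisfy. In particular, no Galois-connection property of the pair $(f,g)$ is needed for this lemma; the order-theoretic compatibility is a formal consequence of how membership appears in the definition of $R$. I do not expect any genuine obstacle here. The only thing to be careful about is matching the direction of the inclusions in \eqref{Eq:R} against the ``enlarge $F$, shrink $G$'' pattern of the defining implication, so that the hypotheses are applied in the correct order; once the translation is set up correctly, the chase is immediate.
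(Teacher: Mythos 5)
Your proposal is correct and follows exactly the same route as the paper: note that $\subseteq$ is trivially a quasiorder, then chase $a \in G' \Rightarrow a \in G \Rightarrow f(a) \in F \Rightarrow f(a) \in F'$ to establish condition \eqref{Eq:R}. Your closing observation that no Galois-connection property of $(f,g)$ is needed is a nice extra remark, but the argument itself is identical to the paper's.
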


\begin{proof} Trivially, $\subseteq$ is a quasiorder.
Suppose that $F \subseteq F'$, $F \, R \, G$, and $G' \subseteq G$. For all $a \in L$,
\begin{align*}
a \in G' & \Longrightarrow a \in G      & \mbox{ (by  $G' \subseteq G$) } \\
         & \Longrightarrow f(a) \in F   & \mbox{ (by  $F \, R \, G$) }  \\
         & \Longrightarrow f(a) \in F'  & \mbox{ (by  $F \subseteq F'$) }
\end{align*}
This means that $F' \, R \, G'$ and also condition \eqref{Eq:R} holds.
\end{proof}

Next we give the definition of rough approximation operators.

\begin{definition} Let $R$ be an arbitrary relation on $U$ and $A \subseteq U$. 
The \emph{upper approximation} of $A$ is defined as 
\[ A^\UP = \{ x \in U \mid \text{ $x \, R \, y$ for some $y \in A$} \} \]
and the \emph{lower approximation} of $A$ is 
\[ A^\Down = \{ x \in U \mid \text{ $y \, R \, x$ implies $y \in A$} \}.\]
\end{definition}

The following result is well known.

\begin{lemma} For any relation $R$ on $U$, the pair $({^\UP}, {^\Down})$ is
a Galois connection on the complete lattice $(\wp(U),\subseteq)$, where
$\wp(U)$ denotes the power set of $U$.
\end{lemma}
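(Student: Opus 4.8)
The plan is to verify directly that the pair $({^\UP}, {^\Down})$ satisfies the defining equivalence of a Galois connection on $(\wp(U), \subseteq)$, namely that for all $A, B \subseteq U$,
\[
A^\UP \subseteq B \iff A \subseteq B^\Down .
\]
First I would unfold both sides using the definitions of the approximation operators. The left-hand side $A^\UP \subseteq B$ says that whenever $x \, R \, y$ for some $y \in A$, then $x \in B$; equivalently, for all $x, y \in U$, if $y \in A$ and $x \, R \, y$, then $x \in B$. The right-hand side $A \subseteq B^\Down$ says that every $y \in A$ belongs to $B^\Down$, i.e.\ for all $y \in A$, whenever $x \, R \, y$ then $x \in B$; spelling this out, for all $x, y \in U$, if $y \in A$ and $x \, R \, y$, then $x \in B$. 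Since both sides reduce to the same universally quantified implication, the equivalence holds.

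The main observation is that this is essentially a reshuffling of quantifiers: both conditions assert that the relation $R$ carries no pair $(x, y)$ with $y \in A$ into the complement of $B$ on the first coordinate. I would therefore present the argument as a short chain of logical equivalences, being careful that the direction of $R$ matches the definitions: $A^\UP$ uses $x \, R \, y$ with $y \in A$ (so the approximated set sits on the right of $R$), while $B^\Down$ uses $y \, R \, x$ with the constraint $y \in B$ (so the tested point sits on the right of $R$). Matching these up correctly is the only place where a sign error could creep in, so I would write out the membership conditions explicitly rather than appealing to symmetry.

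There is no real obstacle here; the statement is flagged as well known and follows formally from the definitions. As an alternative framing, one could instead verify the two clauses of Lemma~\ref{Lem:GaloisCharacterization}: that $A \subseteq (A^\UP)^\Down$ and $(B^\Down)^\UP \subseteq B$ for all $A, B$, together with monotonicity of both operators in $\subseteq$. Monotonicity is immediate from the existential and universal forms, and the two containments again unfold to tautologies about composing $R$ with its converse. However, the direct verification of the adjunction equivalence is the cleanest route, so that is the one I would carry out.
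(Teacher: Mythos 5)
Your proof is correct: both $A^\UP \subseteq B$ and $A \subseteq B^\Down$ unfold to the same universally quantified condition, namely that $y \in A$ and $x \, R \, y$ imply $x \in B$, and you have tracked the direction of $R$ correctly in both definitions (the approximated set on the right of $R$ for ${}^\UP$, the tested point on the right of $R$ for ${}^\Down$). The paper states this lemma as well known and gives no proof, so there is nothing to compare against; your direct verification of the adjunction equivalence is the standard argument and is complete.
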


Next we define complex algebras of GC-frames.

\begin{definition}
Let  $\mathcal{F} = (X, \leq, R)$ be a \emph{GC-frame}. The algebra
\[
 \mathbb{BDL}_{\rm GC}(\mathcal{F}) = (\mathcal{T}_\leq,\cup,\cap, {^\UP}, {^\Down},\emptyset,X)
\]
is the \emph{complex algebra} of $\mathcal{F}$. 
\end{definition}

Complex algebras of GC-frames are  BDLGC-algebras, as we show in our next lemma.

\begin{lemma}\label{Lem:ComplexAlgebra}
Let $\mathcal{F}=(X,\le,R)$ be a GC-frame. Then, the complex algebra $\mathbb{BDL}_{\rm GC}(\mathcal{F}) = 
(\mathcal{T}_\leq,\cup,\cap, {^\UP}, {^\Down},\emptyset,X)$ is a  BDLGC-algebra.
\end{lemma}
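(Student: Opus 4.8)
The plan is to verify that the complex algebra $\mathbb{BDL}_{\rm GC}(\mathcal{F})$ has all the required structure of a BDLGC-algebra, which splits into two essentially independent tasks: (i) showing that $(\mathcal{T}_\leq, \cup, \cap, \emptyset, X)$ is a bounded distributive lattice, and (ii) showing that the pair $({^\UP}, {^\Down})$, when restricted to $\mathcal{T}_\leq$, is a well-defined order-preserving Galois connection on $\mathcal{T}_\leq$. Task (i) is immediate from the preliminaries: since $\leq$ is a quasiorder, $\mathcal{T}_\leq$ is an Alexandrov topology, and the lattice $(\mathcal{T}_\leq, \subseteq)$ of its open (i.e. $\leq$-closed) sets is completely distributive, hence in particular a bounded distributive lattice with bottom $\emptyset$ and top $X$, with joins and meets given by unions and intersections. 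So the real content lies in task (ii).

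For task (ii), the crucial and least routine point is \emph{closure}: I must check that $A^\UP$ and $A^\Down$ are again $\leq$-closed whenever $A \in \mathcal{T}_\leq$, so that ${^\UP}$ and ${^\Down}$ genuinely map $\mathcal{T}_\leq$ into itself. This is exactly where the frame condition \eqref{Eq:R} is needed. First I would take $A \in \mathcal{T}_\leq$, suppose $x \in A^\UP$ and $x \leq x'$, and show $x' \in A^\UP$: from $x \in A^\UP$ there is $y \in A$ with $x \, R \, y$; applying \eqref{Eq:R} with $x \leq x'$, $x \, R \, y$, and $y \leq y$ gives $x' \, R \, y$, so $x' \in A^\UP$. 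Dually, for $A^\Down$ I would suppose $x \in A^\Down$ and $x \leq x'$, take any $y$ with $y \, R \, x'$, and use \eqref{Eq:R} (with $x \leq x'$ read in the appropriate slot, i.e. $x' \geq x$ together with $y \, R \, x'$ and $y \leq y$) to obtain $y \, R \, x$, whence $y \in A$ because $x \in A^\Down$; thus $x' \in A^\Down$. I expect the $A^\Down$ case to be the trickier one to write correctly, since the quantifier runs over predecessors under $R$ and one must be careful to invoke \eqref{Eq:R} with the order relations placed in the slots dictated by the definition $y \, R \, x \Rightarrow y \in A$; this bookkeeping is the main obstacle, though conceptually it is just a direct application of condition \eqref{Eq:R}.

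Once closure is established, the Galois-connection property is essentially inherited for free. We already know (by the lemma preceding this statement) that $({^\UP}, {^\Down})$ is an order-preserving Galois connection on the full power-set lattice $(\wp(X), \subseteq)$. Since $\mathcal{T}_\leq$ is a subset of $\wp(X)$ on which both operators are now known to take values, and since $\subseteq$ restricted to $\mathcal{T}_\leq$ is the same order, the defining equivalence $A^\UP \subseteq B \iff A \subseteq B^\Down$ for $A, B \in \mathcal{T}_\leq$ holds simply as a special case, and order-preservation likewise restricts. Hence $(\mathcal{T}_\leq, \cup, \cap, {^\UP}, {^\Down}, \emptyset, X)$ is a bounded distributive lattice equipped with an order-preserving Galois connection, i.e. a BDLGC-algebra, which completes the proof.
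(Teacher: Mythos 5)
Your proposal is correct and follows essentially the same route as the paper's proof: the bounded distributive lattice part is taken as immediate, the substantive step is verifying that $A^\UP$ and $A^\Down$ are $\leq$-closed via condition \eqref{Eq:R} (with the order relations placed in exactly the slots you describe), and the Galois-connection property is inherited by restriction from the power-set lattice.
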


\begin{proof} It is clear that the algebra 
$(\mathcal{T}_\leq,\cup,\cap,\emptyset,X)$ is a bounded distributive lattice. We have
to show that for all $A \in \mathcal{T}_\leq$, also $A^\UP$ and $A^\Down$ belong
to $\mathcal{T}_\leq$.

Let $x \in A^\UP$ and $x \leq y$. There exists $z \in A$ such that
$x \, R \, z$. We have
\[
 x \leq y, \ x \, R \, z, \ z \leq z
\]
implying $y \, R \, z$, because $(X,\le,R)$ is a GC-frame.
Thus, $y \in A^\UP$ and therefore $A^\UP$ is $\leq$-closed,
and $A^\UP \in \mathcal{T}_\leq$. For the other part, assume
that $x \in A^\Down$ and $x \leq y$. If $z \, R \, y$, then
\[
 z \leq z, \ z \, R \, y, \ x \leq y
\]
imply $z \, R \, x$. Since $x \in A^\Down$, we get $z \in A$.
Hence, $y \in A^\Down$ and $A^\Down \in \mathcal{T}_\leq$.
\end{proof}

We will present a representation theorem for bounded distributive lattices with Galois connections. 
For that, we need the following lemma, which can be found in \cite{DzJaKo10}, for instance.

\begin{lemma} [{\bf Prime Filter Theorem}]
Let $L$ be a distributive lattice, $F$ a filter, and $a\in L$. If $a \notin F$, then there exists a 
prime filter $P$ such that $F \subseteq P$ and $a \notin P$.
\end{lemma}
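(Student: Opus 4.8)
The plan is to obtain $P$ as a maximal filter separating $a$ from $F$ by an application of Zorn's lemma, and then to use distributivity to upgrade maximality to primeness. First I would consider the family
\[
\mathcal{P} = \{ G \mid G \text{ is a filter}, \, F \subseteq G, \, a \notin G \},
\]
partially ordered by inclusion. It is nonempty, since the hypothesis $a \notin F$ guarantees $F \in \mathcal{P}$. Given a chain $\mathcal{C} \subseteq \mathcal{P}$, its union $\bigcup \mathcal{C}$ is again a filter — the union of a totally ordered family of filters is upward closed and, because any two members of the union already lie in a common link of the chain, closed under finite meets — and it clearly contains $F$ while omitting $a$. Hence $\bigcup \mathcal{C}$ is an upper bound for $\mathcal{C}$ in $\mathcal{P}$, and Zorn's lemma yields a maximal element $P$ of $\mathcal{P}$.

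Next I would verify that this $P$ is a proper prime filter. Since $a \notin P$ we have $P \neq L$, so $P$ is proper and it remains only to check that $x \vee y \in P$ implies $x \in P$ or $y \in P$. The key step, and the one where distributivity is indispensable, proceeds by contradiction. Suppose $x \vee y \in P$ yet $x \notin P$ and $y \notin P$. The filters $\langle P \cup \{x\} \rangle$ and $\langle P \cup \{y\} \rangle$ generated by adjoining $x$, respectively $y$, to $P$ each strictly contain $P$, so by maximality neither belongs to $\mathcal{P}$; that is, each contains $a$. Using the explicit description of a generated filter, $\langle P \cup \{x\} \rangle = \{ z \in L \mid z \geq p \wedge x \text{ for some } p \in P \}$, I obtain $p_1, p_2 \in P$ with $a \geq p_1 \wedge x$ and $a \geq p_2 \wedge y$. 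Setting $p = p_1 \wedge p_2 \in P$ and using upward closure, both $a \geq p \wedge x$ and $a \geq p \wedge y$ hold, so
\[
a \geq (p \wedge x) \vee (p \wedge y) = p \wedge (x \vee y)
\]
by the distributive law. Since $p \in P$ and $x \vee y \in P$, the meet $p \wedge (x \vee y)$ lies in $P$, and upward closure of $P$ then forces $a \in P$, contradicting $a \notin P$. Therefore $P$ is prime, and $F \subseteq P$ with $a \notin P$ as required.

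The main obstacle is not conceptual but lies in the two routine verifications that must be handled cleanly: that the union of a chain of $a$-omitting filters still omits $a$ and is itself a filter, and that the generated filter admits the explicit description used above. Once these are in place, the entire weight of the theorem rests on the single distributivity computation in the displayed inequality; this is precisely where the hypothesis that $L$ is distributive enters, and without it maximality alone would fail to yield primeness.
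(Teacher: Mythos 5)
Your proof is correct: the Zorn's lemma argument producing a filter maximal among those containing $F$ and omitting $a$, followed by the distributivity computation $a \geq (p \wedge x) \vee (p \wedge y) = p \wedge (x \vee y)$ to upgrade maximality to primeness, is the standard and complete argument. The paper itself does not prove this lemma but only cites it (to \cite{DzJaKo10}), and what you have written is precisely the proof one finds in such references, so there is nothing to contrast.
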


Let $S$ be a non-empty subset of a lattice $L$ such that $a \vee b \in S$ implies 
$a \in S$ or $b \in S$ for all $a,b \in L$. It is easily seen that such sets $S$
can be characterised as the sets  whose set-theoretical complement $-S$ is a 
$\vee$-subsemilattice of $L$. In \cite{DzJaKo10}, we proved the following lemma.

\begin{lemma} \label{Lem:Co-filter} Let $L$ be a distributive lattice.
If $F$ is a filter and $Q$ is a superset of $F$ such that its
set-theoretical complement $-Q$ is a $\vee$-subsemilattice of $L$,
then there exists a prime filter $P$ such that $F \subseteq P \subseteq Q$.
\end{lemma}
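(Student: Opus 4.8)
The plan is to obtain $P$ as a maximal element of the family of filters squeezed between $F$ and $Q$, and then to show that maximality forces $P$ to be prime. Concretely, consider
\[
\mathcal{P} = \{ G \mid G \text{ is a filter of } L \text{ with } F \subseteq G \subseteq Q \},
\]
ordered by inclusion. This family is nonempty, since $F \in \mathcal{P}$ by the hypothesis $F \subseteq Q$.

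First I would verify the hypotheses of Zorn's lemma. Given a nonempty chain $\mathcal{C} \subseteq \mathcal{P}$, its union $\bigcup \mathcal{C}$ is again a filter: it is upward closed because each member is, and it is closed under $\wedge$ because any two of its elements already lie in a common member of the chain. Moreover $F \subseteq \bigcup \mathcal{C} \subseteq Q$, so $\bigcup \mathcal{C} \in \mathcal{P}$ is an upper bound of $\mathcal{C}$. Zorn's lemma then yields a maximal element $P$ of $\mathcal{P}$, and by construction $F \subseteq P \subseteq Q$; it remains only to prove that $P$ is prime.

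The heart of the argument, and the step I expect to be the main obstacle, is the primeness of $P$. The difficulty is that $-Q$ is assumed only to be a $\vee$-subsemilattice, not an ideal, so I cannot argue by downward closure and must instead combine distributivity with the upward closure of $P$. Suppose, for contradiction, that $a \vee b \in P$ while $a \notin P$ and $b \notin P$. The filter $\langle P \cup \{a\} \rangle = \{ x \in L \mid x \geq p \wedge a \text{ for some } p \in P \}$ properly contains $P$ and still contains $F$, so by maximality it cannot lie in $\mathcal{P}$; hence it is not contained in $Q$, and there is some $u \geq p_1 \wedge a$ with $p_1 \in P$ and $u \in -Q$. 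Symmetrically I obtain $v \geq p_2 \wedge b$ with $p_2 \in P$ and $v \in -Q$, and since $-Q$ is a $\vee$-subsemilattice, $u \vee v \in -Q$. Setting $p = p_1 \wedge p_2 \in P$ and using distributivity,
\[
p \wedge (a \vee b) = (p \wedge a) \vee (p \wedge b) \leq (p_1 \wedge a) \vee (p_2 \wedge b) \leq u \vee v .
\]
The left-hand side lies in $P$ because $p, a \vee b \in P$ and $P$ is closed under $\wedge$; since $P$ is upward closed this forces $u \vee v \in P \subseteq Q$, contradicting $u \vee v \in -Q$. Thus no such $a,b$ exist and $P$ is prime, which finishes the proof.

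I would also note an alternative route worth mentioning: replacing $-Q$ by its downward closure $I = \{ x \mid x \leq s \text{ for some } s \in -Q \}$ turns it into an ideal, and because $F$ is upward closed one checks that $I \cap F = \emptyset$; the conclusion then follows from the standard filter--ideal separation theorem for distributive lattices. I nevertheless prefer the direct Zorn argument above, since it uses only the machinery already available in the paper and pinpoints exactly where distributivity and the $\vee$-closure of $-Q$ enter.
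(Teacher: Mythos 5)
Your proof is correct. Note that the paper itself gives no proof of this lemma---it defers to the reference [DzJaKo10]---so there is nothing in the text to compare against line by line; but your argument is the standard one for this kind of separation statement: take a filter maximal among those squeezed between $F$ and $Q$ via Zorn's lemma, and use distributivity together with the $\vee$-closure of $-Q$ to show that maximality forces primeness. The key computation $p \wedge (a \vee b) = (p\wedge a)\vee(p\wedge b) \leq u \vee v$ is exactly where both hypotheses enter, and it is carried out correctly; your alternative route via the downward closure of $-Q$ and filter--ideal separation is also valid. The only point worth making explicit is that $P$ is a \emph{proper} filter (as primeness requires): this follows because $-Q$ must be nonempty for the statement to be nontrivial, whence $P \subseteq Q \subsetneq L$.
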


\begin{proposition} \label{Prop:Embedding}
Let $\mathbb{BDL}_{\rm GC} = (L,\vee, \wedge, f, g, 0, 1)$ be a BDLGC-algebra. Then, there exists a GC-frame 
$\mathcal{F} = (X,\leq,R)$ such that  $\mathbb{BDL}_{\rm GC}$ is isomorphic
to a subalgebra of $\mathbb{BDL}_{\rm GC}(\mathcal{F})$.
If\/  $\mathbb{BDL}_{\rm GC}$ is finite, then it is isomorphic to  $\mathbb{BDL}_{\rm GC}(\mathcal{F})$.
\end{proposition}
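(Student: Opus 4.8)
The plan is to build the required GC-frame as the canonical frame $\mathcal{F}_{X(L)} = (X(L), \subseteq, R)$, whose elements are the prime filters of $L$ ordered by inclusion, with $R$ as in \eqref{Def:R}. By Lemma~\ref{Lem:CanonicalFrame} this is a genuine GC-frame, so it remains to produce an embedding $h \colon L \to \mathcal{T}_\subseteq$ and verify it respects all operations. The natural candidate is the \emph{Stone map} $h(a) = \{ F \in X(L) \mid a \in F \}$. First I would check that each $h(a)$ is $\subseteq$-closed, hence lies in $\mathcal{T}_\subseteq$: if $a \in F$ and $F \subseteq F'$ then $a \in F'$, so $h(a) \in \mathcal{T}_\subseteq$ by definition of the Alexandrov topology on the quasiordered set $(X(L), \subseteq)$.

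Next I would establish that $h$ is a bounded-lattice embedding, which is the classical part of Stone-type duality. The equalities $h(a \wedge b) = h(a) \cap h(b)$, $h(a \vee b) = h(a) \cup h(b)$, $h(0) = \emptyset$, and $h(1) = X(L)$ follow from the definition of prime filter; the forward inclusions are routine, and the nontrivial inclusion $h(a) \cup h(b) \subseteq h(a \vee b)$ uses primeness directly. Injectivity is where the Prime Filter Theorem enters: if $a \neq b$, say $a \not\leq b$, then $a$ belongs to the filter $\mathord{\uparrow} a$ while $b \notin \mathord{\uparrow} a$, so some prime filter $P$ contains $a$ but not $b$, witnessing $h(a) \neq h(b)$.

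The main work, and the step I expect to be the crux, is showing that $h$ commutes with the Galois maps, i.e.\ that $h(f(a)) = h(a)^{\UP}$ and $h(g(a)) = h(a)^{\Down}$, where the approximations are taken with respect to $R$ on the canonical frame. Unwinding definitions, $F \in h(a)^{\UP}$ means $F \, R \, G$ for some prime filter $G$ with $a \in G$, and I want this to be equivalent to $f(a) \in F$. The easy direction is: if such a $G$ exists, then $a \in G$ together with $F \, R \, G$ gives $f(a) \in F$ by \eqref{Def:R}. The hard direction requires, given $f(a) \in F$, the construction of a prime filter $G$ containing $a$ with $F \, R \, G$; here the filter-separation Lemma~\ref{Lem:Co-filter} is the right tool. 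I would take the principal filter generated by $a$ and an appropriately chosen co-filter (a set with $\vee$-subsemilattice complement) coding the constraint ``$f(b) \notin F$ forces $b \notin G$,'' then invoke Lemma~\ref{Lem:Co-filter} to extract the desired prime filter $G$ sitting between them. Verifying that the complement set is genuinely $\vee$-closed is where one uses that $f$ preserves joins (Proposition~\ref{Prop:GaloisProperty}(b)) and that $F$ is prime. The lower-approximation identity $h(g(a)) = h(a)^{\Down}$ follows by the dual argument, or alternatively by appealing to the uniqueness of the partner in a Galois connection (Proposition~\ref{Prop:GaloisProperty}(c)) once the upper identity is known, since $({\UP},{\Down})$ and $(f,g)$ are both Galois connections and $h$ is a lattice embedding.

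For the finite case, $L$ is finite, hence spatial and algebraic by Example~\ref{Ex:Finite}, and $\mathcal{T}_\subseteq$ is then a finite distributive lattice. Surjectivity of $h$ follows because every $\subseteq$-closed set of prime filters is determined by, and equals the image of, some lattice element: in the finite setting the prime filters correspond to join-irreducibles, and a standard counting or generator argument shows $h$ is onto, upgrading the embedding to an isomorphism.
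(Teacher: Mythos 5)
Your proposal follows essentially the same route as the paper's proof: the canonical frame $\mathcal{F}_{X(L)}$ with the Stone map $h$, Lemma~\ref{Lem:Co-filter} applied to ${\uparrow}a$ and $f^{-1}(F)$ for the identity $h(f(a)) = h(a)^\UP$ (with the complement of $f^{-1}(F)$ being a $\vee$-subsemilattice precisely because $f$ preserves joins and $F$ is prime), the Prime Filter Theorem for the lower approximation, and the correspondence between prime filters and join-irreducibles for surjectivity in the finite case. One caution: your alternative derivation of $h(g(a)) = h(a)^\Down$ from the upper identity via ``uniqueness of the Galois partner'' does not go through as stated, since $h$ is neither surjective nor a morphism of complete lattices, so the formula $g(q)=\bigvee\{p \mid f(p)\leq q\}$ does not transfer along $h$ and one only obtains the inclusion $h(g(a)) \subseteq h(a)^\Down$; the direct argument you mention first is the correct one (note that $g^{-1}(F)$ is a filter because $g$ preserves meets, then extend it by the Prime Filter Theorem to a prime filter omitting $a$).
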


\begin{proof} 
Let us define the mapping $h$ from $\mathbb{BDL}_{\rm GC}$ to 
$\mathbb{BDL}_{\rm GC}(\mathcal{F}_{X(L)})$, where $\mathcal{F}_{X(L)}$ is the canonical frame, by setting
\[ h(x) = \{ F \in X(L) \mid x \in F\}.\]
It is well-known that $h$ is a lattice embedding, because $L$ is a distributive lattice 
(see \cite{Grat98}, for example). In addition,
$h(0) = \emptyset$ and $h(1) = X(L)$. Next we show that for all $x \in L$,
\[ h(g(x)) = h(x)^\Down .\]
Let $F \in h(g(x))$, that is, $g(x) \in F \in X(L)$. Suppose $F \notin h(x)^\Down$.
Then there exists $G \in X(L)$ such that $(G,F) \in R$ and $G \notin h(x)$. Now $g(x) \in F$
implies $x \in G$ by the definition of $R$, that is, $G \in h(x)$, a contradiction. 
Hence, $F \in h(x)^\Down$.
Conversely, if $F \in h(x)^\Down$, then $(G,F) \in R$ implies $G \in h(x)$, that is, $x \in G$.
Suppose that $F \notin h(g(x))$, that is, $g(x) \notin F$. Because $g$ is multiplicative
and order-preserving, the preimage $g^{-1}(F) = \{ a \in L \mid g(a) \in F\}$
is a filter since $F$ is a filter. Clearly, $x \notin g^{-1}(F)$. 
Then, by the Prime Filter Theorem, there exists a prime filter $G$ such that 
$g^{-1}(F) \subseteq G$ and $x \notin G$. Now $g^{-1}(F) \subseteq G$ means that $g(a) \in F$
implies $a \in G$ for all $a \in L$. By the definition of $R$, this gives $(G,F) \in R$.
Therefore, $x \in G$, a contraction. So,  $F \in h(g(x))$.

Similarly, we show that
\[ h(f(x)) = h(x)^\UP .\]
Assume that $F \in h(x)^\UP$. Then there exists $G \in h(x)$ such that $(F,G) \in R$.
Since $G \in h(x)$ is equivalent to $x \in G$, we have by the definition of $R$ 
that $f(x) \in F$, that is, $F \in h(f(x))$. On the other hand, suppose that 
$F \in h(f(x))$, that is, $f(x) \in F$. Clearly,
$f^{-1}(F) = \{a \in L \mid f(a) \in L \}$ is such that $a \vee b \in f^{-1}(F)$
implies $a \in f^{-1}(F)$ or $a \in f^{-1}(F)$, and we can easily 
show that ${\uparrow} x \subseteq f^{-1}(F)$.
By Lemma~\ref{Lem:Co-filter}, there is a prime filter $H$ such
that ${\uparrow} x \subseteq H \subseteq f^{-1}(F)$. This means that for all $a \in L$,
$a \in H$ implies $f(a) \in F$, that is, $(F,H) \in R$. Because $x \in H$,
we have $H \in h(x)$ and $F \in  h(x)^\UP$.

Finally, let $L$ be a finite BDLGC-algebra and let $J(L)$ be the set of all join-irreducible
elements of the lattice $L$.
Because $L$ is a finite lattice, any filter $F$ of $L$ is principal, that is,
$F = {\uparrow} a$ for some $a \in L$. In addition, any principal filter ${\uparrow} b$ is
prime if and only if $b \in J(L)$ (see e.g. \cite[p.~67]{BaDw74}). In other words,
for each prime filter $P$, there exists a join-irreducible element $a \in J(L)$ such that 
$P = {\uparrow} a$. This then means that $X(L) = \{ {\uparrow} a \mid a \in J(L) \}$.

We show that the map $h$ is onto  $\mathbb{BDL}_{\rm GC}(\mathcal{F}_{X(L)})$. 
Assume that $A \in  \mathbb{BDL}_{\rm GC}(\mathcal{F}_{X(L)})$.
This means that $A$ is a $\subseteq$-closed subset of $X(L)$. Let us set 
$x = \bigvee \{ a \in J(L) \mid {\uparrow}{a} \in A \}$. Now 
$h(x) = \{ {\uparrow} a \mid a \leq x \mbox{ and } a \in J(L) \}$.

If ${\uparrow} c \in h(x)$, then $c \in J(L)$ and $c \leq  \bigvee \{ a \in J(L) \mid {\uparrow} a \in A \}$.
Because $L$ is finite and $c$ is join-irreducible, we have that $c \leq y$ for some
$y \in \{ a \in J(L) \mid {\uparrow} a \in A \}$. Now $c \leq y$ implies ${\uparrow} y \subseteq {\uparrow} c$.
Since $A$ is $\subseteq$-closed, we have ${\uparrow} c \in A$. The inclusion $A \subseteq h(x)$ is clear.
\end{proof}

\section{Representing Heyting and Heyting--Brouwer algebras with Galois Connections in terms of Rough Sets}
\label{Sec:HGC+HBGC}

An \emph{HGC-algebra} $\mathbb{H}_{\rm GC} = (L, \vee, \wedge, \to, f, g, 0, 1)$ is a BDLGC-algebra 
such that $L$ forms a Heyting algebra, that is, $a \to b$ exists for every $a,b \in  L$.
In other words, an HGC-algebra is a Heyting algebra $(L, \vee, \wedge, \to , 0 ,1)$
equipped with an order-preserving Galois connection $(f,g)$.

We have proved that HGC-algebras provide a model for the logic {\sf IntGC}, 
the intuitionistic logic with a Galois connection. More precisely, a formula $\phi$ is provable in 
{\sf IntGC} if and only if $\phi$ is valid in all  HGC-algebras \cite{DzJaKo10}.
In addition, we have shown in \cite{DzJaKo12} that  {\sf IntGC} has the finite model property,
meaning that a formula $\phi$ is provable in {\sf IntGC} if and only if $\phi$ is valid in all finite  
HGC-algebras.

GC-frames introduced in Definition~\ref{GC-frame} serve also as frames for HGC-algebras. The canonical frame
of an HGC-algebra is $\mathcal{F}_{X(L)} = (X(L),\subseteq,R)$, where $X(L)$ is the set of prime filters
and $R$ is defined as in \eqref{Def:R}. Similarly, for a GC-frame $\mathcal{F}$, its complex HGC-algebra
is 
\[ 
\mathbb{H}_{\rm GC}(\mathcal{F}) = (\mathcal{T}_\leq,\cup,\cap, \to, {^\UP}, {^\Down},\emptyset,X),
\]
where $\to$ is defined as in Proposition~\ref{Prop:Rauszer}.
Clearly, the complex algebra  $\mathbb{H}_{\rm GC}(\mathcal{F})$ of any GC-frame $\mathcal{F}$ is an HGC-algebra,
because $\mathcal{T}_\leq$ is a Heyting algebra, and $A^\Down, A^\UP \in \mathcal{T}_\leq$ for
all $A \in \mathcal{T}_\leq$, as we showed in the proof of Lemma~\ref{Lem:ComplexAlgebra}.

Proposition~\ref{Prop:Embedding} can be easily extended to the following representation theorem 
of HGC-algebras. Note that the result appeared for the first time in \cite[Theorem~7.2]{DzJaKo10}, and
that in \cite[Lemma~2.2]{Orlowska07}, for instance, it is proved that
\[ h(a \to b) = h(a) \to h(b) \]
for all $a,b \in L$.

\begin{proposition} \label{Prop:EmbeddingHeyting}
Let\/ $\mathbb{H}_{\rm GC} = (L,\vee, \wedge, f, g, 0, 1)$ be an HGC-algebra. Then, there exists a GC-frame 
$\mathcal{F} = (X,\leq,R)$ such that  $\mathbb{H}_{\rm GC}$ is isomorphic
to a subalgebra of $\mathbb{H}_{\rm GC}(\mathcal{F})$.
\end{proposition}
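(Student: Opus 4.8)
The plan is to reuse the canonical frame and the embedding from Proposition~\ref{Prop:Embedding} essentially verbatim, and to add only the verification that the same map also respects the Heyting implication. Concretely, I would again take the canonical frame $\mathcal{F}_{X(L)} = (X(L), \subseteq, R)$, which is a GC-frame by Lemma~\ref{Lem:CanonicalFrame}, together with the map $h(x) = \{F \in X(L) \mid x \in F\}$. From Proposition~\ref{Prop:Embedding} I already know that $h$ is an injective bounded-lattice homomorphism satisfying $h(f(x)) = h(x)^\UP$ and $h(g(x)) = h(x)^\Down$, so the entire BDLGC-structure is preserved. Since the complex algebra $\mathbb{H}_{\rm GC}(\mathcal{F}_{X(L)})$ is an HGC-algebra and $h$ is already a BDLGC-embedding, the only remaining task is to show $h(a \to b) = h(a) \to h(b)$ for all $a, b \in L$, where the implication on the right is the Heyting operation of $\mathcal{T}_\subseteq$ described in Proposition~\ref{Prop:Rauszer}.

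Unwinding the definitions, note that the frame order is inclusion of prime filters, so $\geq$ becomes $\supseteq$, and Proposition~\ref{Prop:Rauszer} gives $h(a) \to h(b) = \{P \in X(L) \mid (\forall Q \supseteq P)\, a \in Q \text{ implies } b \in Q\}$, whereas $h(a \to b) = \{P \in X(L) \mid a \to b \in P\}$. Thus the required identity reduces to the claim that $a \to b \in P$ if and only if every prime filter $Q$ extending $P$ that contains $a$ also contains $b$. The forward direction I would dispatch using the residuation inequality $a \wedge (a \to b) \leq b$: if $a \to b \in P \subseteq Q$ and $a \in Q$, then $a \wedge (a \to b)$ lies in the filter $Q$, which forces $b \in Q$.

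The backward direction is where the genuine work sits, and it is the step I expect to be the main obstacle. Arguing by contraposition, I would assume $a \to b \notin P$ and build the filter $F$ generated by $P \cup \{a\}$, namely $F = \{x \in L \mid x \geq p \wedge a \text{ for some } p \in P\}$, and then show $b \notin F$: if $b \geq p \wedge a$ for some $p \in P$, then residuation gives $p \leq a \to b$, whence $a \to b \in P$ since $P$ is upward closed, a contradiction. With $b \notin F$ in hand, the Prime Filter Theorem yields a prime filter $Q$ with $F \subseteq Q$ and $b \notin Q$; since $a \in F \subseteq Q$ and $P \subseteq F \subseteq Q$, this $Q$ witnesses the failure of the right-hand condition, so the contrapositive delivers $a \to b \in P$. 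This establishes $h(a \to b) = h(a) \to h(b)$, and combined with Proposition~\ref{Prop:Embedding} shows that $h$ is an HGC-embedding of $\mathbb{H}_{\rm GC}$ into $\mathbb{H}_{\rm GC}(\mathcal{F}_{X(L)})$, as required; the computation is also recorded in \cite[Lemma~2.2]{Orlowska07}.
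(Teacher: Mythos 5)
Your proposal is correct and follows exactly the paper's route: reuse the canonical frame and the embedding $h$ from Proposition~\ref{Prop:Embedding}, and check additionally that $h(a \to b) = h(a) \to h(b)$. The only difference is that the paper dispatches this last identity by citing \cite[Lemma~2.2]{Orlowska07}, whereas you carry out the (standard and correct) prime-filter argument in full.
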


Our main result of this work is the following representation theorem. We say that an
HGC-algebra is \emph{spatial}, if its underlying lattice is spatial. Recall
that spatial lattices are always complete, so for any spatial Heyting algebra,
the underlying lattice is complete. Additionally, by Example~\ref{Ex:Finite},
each finite distributive lattice with a Galois connection determines
a spatial HGC-algebra.

\begin{theorem} \label{Thm:Representation}
Let $\mathbb{H}_{\rm GC} = (L, \vee, \wedge, f, g, 0, 1)$ be a spatial HGC-algebra. Then, there exists a GC-frame 
$\mathcal{F} = (X,\leq,R)$ such that  $\mathbb{H}_{\rm GC}$ is isomorphic
to $\mathbb{H}_{\rm GC}(\mathcal{F})$.
\end{theorem}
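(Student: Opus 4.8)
The plan is to realize the abstract spatial lattice $L$ concretely as an Alexandrov topology, then to reconstruct the Galois connection $(f,g)$ from a suitable relation $R$ on the space of join-irreducibles, so that $f$ and $g$ become the rough upper and lower approximation operators. Since $\mathbb{H}_{\rm GC}$ is spatial, its underlying Heyting lattice satisfies (JID); by Remark~\ref{Rem:AlexandrovIsomorphism}(d) it is order-isomorphic to an Alexandrov topology. Concretely, I would take $X = \mathcal{J}$ to be the set of completely join-irreducible elements of $L$, equip $X$ with the order inherited from $L$, and define the quasiorder $\le$ on $X$ by $j \le k \iff k \le_L j$ (the reversal is the natural one, since in the Alexandrov representation a join-irreducible $j$ corresponds to the up-set it generates). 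The isomorphism $L \cong \mathcal{T}_\le$ sends $a \in L$ to $\varphi(a) = \{\, j \in \mathcal{J} \mid j \le a \,\}$, and spatiality guarantees $a = \bigvee \varphi(a)$, so $\varphi$ is a lattice isomorphism; that it also respects $\to$ follows as in the cited \cite[Lemma~2.2]{Orlowska07}.

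The heart of the argument is the choice of $R$. Guided by the canonical construction~\eqref{Def:R}, I would define, for $j,k \in \mathcal{J}$,
\[
 k \, R \, j \iff k \le_L f(j),
\]
or equivalently capture this via $g$ using Proposition~\ref{Prop:GaloisProperty}(c). The first task is to verify that this $R$ satisfies the compatibility condition \eqref{Eq:R}: if $k' \le k$, $k \, R \, j$, and $j \le j'$ in $(X,\le)$—which in $L$ means $k \le_L k'$ and $j' \le_L j$—then from $k \le_L f(j)$ and order-preservation of $f$ one deduces $k' \, R \, j'$; this is the direct analogue of Lemma~\ref{Lem:CanonicalFrame} and should go through routinely. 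Thus $\mathcal{F} = (X,\le,R)$ is a genuine GC-frame and $\mathbb{H}_{\rm GC}(\mathcal{F})$ is defined.

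Next I would prove the two key identities $\varphi(f(a)) = \varphi(a)^\UP$ and $\varphi(g(a)) = \varphi(a)^\Down$ for all $a \in L$. For the upper approximation, $j \in \varphi(a)^\UP$ means $j \, R \, k$ for some $k \le_L a$, i.e.\ $j \le_L f(k)$ for some completely join-irreducible $k \le_L a$; using spatiality of $a$, order-preservation of $f$, and that $f$ preserves existing joins (Proposition~\ref{Prop:GaloisProperty}(b)) together with the complete join-irreducibility of $j$, one shows this is equivalent to $j \le_L f(a)$, i.e.\ $j \in \varphi(f(a))$. The lower-approximation identity is handled dually, invoking that $g$ preserves all existing meets and the meet-infinite distributive law available in the Heyting setting. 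The main obstacle—and the place where spatiality is doing real work that the mere embedding of Proposition~\ref{Prop:EmbeddingHeyting} could not—is exactly the ``$j \in \varphi(f(a)) \Rightarrow j \in \varphi(a)^\UP$'' direction: here I must produce a single completely join-irreducible witness $k \le_L a$ with $j \le_L f(k)$, and this requires leveraging $f(a) = f(\bigvee \varphi(a)) = \bigvee_{k \in \varphi(a)} f(k)$ together with the complete join-irreducibility of $j$ to select one such $k$. Once both identities are established, $\varphi$ is an isomorphism of HGC-algebras onto $\mathbb{H}_{\rm GC}(\mathcal{F})$, with $f$ and $g$ represented respectively by ${}^\UP$ and ${}^\Down$, completing the proof.
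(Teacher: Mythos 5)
Your construction is essentially the paper's own proof: the same space $X=\mathcal{J}$ with the reversed order, the same relation $k\,R\,j \iff k\le_L f(j)$, the same isomorphism $a\mapsto\{j\in\mathcal{J}\mid j\le a\}$, and the same argument for $\varphi(f(a))=\varphi(a)^\UP$ via join-preservation of $f$, (JID), and complete join-irreducibility of $j$. The only place you drift is the identity $\varphi(g(a))=\varphi(a)^\Down$: you propose to handle it ``dually'' using meet-preservation of $g$ and (MID), but that is not the right tool and a genuinely dual argument would call for completely meet-irreducible elements, which play no role here. What actually closes this case (and is what the paper does) is a direct chain of equivalences: $j\in\varphi(a)^\Down$ says that every $k\in\mathcal{J}$ with $k\le f(j)$ satisfies $k\le a$, which by spatiality applied to $f(j)$ is equivalent to $f(j)\le a$, which by the adjunction is equivalent to $j\le g(a)$, i.e.\ $j\in\varphi(g(a))$. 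This is a minor repair, not a structural flaw.
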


\begin{proof}
Since $\mathbb{H}_{\rm GC}$ is a spatial HGC-algebra, in the underlying complete lattice $L$, 
each element of $L$ can be represented as the join of completely join-irreducible elements $\mathcal{J}$ below it
(see Remark~\ref{Rem:AlexandrovIsomorphism}).
We define an order $\triangleleft$ on $\mathcal{J}$ by setting
\[ x \triangleleft y \iff y \leq x \text{ in } L.\]
Let $\mathcal{T}_\triangleleft$ be the set of upsets with respect to $\triangleleft$.
Then, $\mathcal{T}_\triangleleft$ is an Alexandrov topology on $\mathcal{J}$,
and $(\mathcal{T}_\triangleleft,\cup,\cap,\to,\emptyset,X)$ is a spatial
Heyting algebra, where the operation $\to$ is defined as in Proposition~\ref{Prop:Rauszer}.
Obviously, for all $x,y \in \mathcal{J}$,  $x \leq y \iff N(x) \subseteq N(y)$.
Recall also that $N(x) = \{ y \in \mathcal{J} \mid x \triangleleft y \}$ and
the set of completely join-irreducible elements of $\mathcal{T}_\triangleleft$ is 
$\mathcal{B} = \{ N(x) \mid x \in \mathcal{J} \}$.

We define a map 
\[ \varphi \colon \mathcal{J} \to \mathcal{B}, j \mapsto N(j).\]
Clearly, $\varphi$ is an order-isomorphism between $(\mathcal{J},\leq)$
and $(\mathcal{B},\subseteq)$. Because both the complete lattices $L$ and 
$\mathcal{T}_\triangleleft$ satisfy (JID) and are spatial, $\varphi$ can be canonically 
extended to an isomorphism $\Phi \colon L \to \mathcal{T}_\triangleleft$ by
\begin{align*}
\Phi(x) & = \bigcup \{ \varphi(j) \mid j \in \mathcal{J} \text{ and } j \leq x \} \\
& = \bigcup \{ N(j) \mid j \in \mathcal{J} \text{ and } j \leq x \}
\end{align*}

Obviously, $\Phi(0) = \emptyset$  and $\Phi(1) = \mathcal{J}$. 
Since $L$ and $\mathcal{T}_\triangleleft$ are Heyting algebras,
and the relative pseudocomplement is unique in the sense that it
depends only on the order of the Heyting algebra in question, we have
$\Phi(x \to y) =  \Phi(x) \to \Phi(y)$, because
the ordered sets $(L,\leq)$ and $(\mathcal{T}_\triangleleft,\subseteq)$ are isomorphic.
Note that for all $x \in L$ and $j \in \mathcal{J}$,
\[ j \in \Phi(x) \iff j \leq x.\]
This is because $j \in \Phi(x)$ implies $j \in N(k)$ for some $k \in \mathcal{J}$
such that $k \leq x$. Thus, $k \triangleleft j$ and $j \leq k$, which
give $j \leq x$. On the other hand, if $j \leq x$, then $j \in N(j)$
gives $j \in \Phi(x)$.

Let us define a binary relation $R$ in $\mathcal{J}$ such that for all $j,k \in \mathcal{J}$, 
\[ j \, R \, k \iff j \leq f(k) \iff f(k) \triangleleft j.\]
Next we will show that $\mathcal{F} = (\mathcal{J}, \triangleleft, R)$ is a GC-frame.
Suppose $j,j',k,k' \in \mathcal{J}$ are such that $j \triangleleft j'$,
$j \, R \, k$, and $k' \triangleleft k$. Then, $j' \leq j$, $j \leq f(k)$, and
$k \leq k'$. Because $f$ is order-preserving, we have $f(k) \leq f(k')$ and 
$j' \leq j \leq f(k) \leq f(k')$, that is, $j' \, R \, k'$. Thus, $\mathcal{F}$
is a GC-frame.  By the proof of Lemma~\ref{Lem:ComplexAlgebra}, for any $A \in \mathcal{T}_\triangleleft$, 
we have $A^\UP,A^\Down \in \mathcal{T}_\triangleleft$. Therefore,
$(\mathcal{T}_\triangleleft,\cup,\cap,\to, {^\UP}, {^\Down},\emptyset, X)$
is an HGC-algebra. We have also noted that $\mathcal{T}_\triangleleft$ and $L$ are 
isomorphic as Heyting algebras.

We have to still show that for all $x \in L$,
\[ \Phi(f(x)) = \Phi(x)^\UP \quad \text{ and } \quad \Phi(g(x)) = \Phi(x)^\Down. \]
Suppose that $j \in \Phi(x)^\UP$. This means that there is $k \in \Phi(x)$ such
that $j \, R \, k$, that is, $j \leq f(k)$. Now $k \in \Phi(x)$ implies
$k \leq x$ and $f(k) \leq f(x)$. Hence, $j \leq f(x)$ and $j \in \Phi(f(x))$.

On the other hand, if $j \in \Phi(f(x))$, then $j \leq f(x)$. Because
\[ x = \bigvee \{ k \in \mathcal{J} \mid k \leq x \}, \]
we can write
\begin{align*}
 f(x) & = f \big ( \bigvee \{ k \in \mathcal{J} \mid k \leq x \} \big ) \\
      & = \bigvee \{ f(k) \mid k \in \mathcal{J} \text{ and } k \leq x \};
\end{align*}
note that $f$ preserves all existing joins. This implies that
\[  j \leq  \bigvee \{ f(k) \mid k \in \mathcal{J} \text{ and } k \leq x \}, \] 
and we have 
\[ j \wedge  {\bigvee \{ f(k) \mid k \in \mathcal{J} \text{ and } k \leq x \}} = j.\]
Because the lattice $L$ satisfies \eqref{Eq:JID}, we get
\[ j =  \bigvee \{ j \wedge f(k) \mid k \in \mathcal{J} \text{ and } k \leq x \}. \]
The element $j$ is a completely join-irreducible element, and thus we obtain $j = j \wedge f(k)$
for some $k \in \mathcal{J}$ and $k \leq x$. Thus,  $j \leq f(k)$
for some $k \in \mathcal{J}$ and $k \leq x$. This means $j \, R \, k$ and $k \in \Phi(x)$,
that is, $j \in \Phi(x)^\UP$. We have now proved $\Phi(f(x)) = \Phi(x)^\UP$.

For the second part, we have
\begin{align*}
j \in \Phi(x)^\Down &\iff  k \, R \, j \text{ implies } k \in \Phi(x) \\
 & \iff k \leq f(j) \text{ implies } k \leq x \\
 & \iff f(j) \leq x  \\
 & \iff j \leq g(x) \\
 & \iff j \in \Phi(g(x)).
\end{align*}
This means that $\Phi(x)^\Down = \Phi(g(x))$.
\end{proof}

We end this work by considering representation of Heyting--Brouwer algebras with a Galois
connection in terms of Alexandrov topologies and rough sets.

An \emph{HBGC-algebra} $\mathbb{HB}_{GC} = (L, \vee, \wedge, \to, \gets, f, g, 0, 1)$ is 
an algebra such that $(L, \vee, \wedge, \to, \gets, 0, 1)$ is a Heyting--Brouwer algebra
and $(f,g)$ is an order-preserving Galois connection on $L$.
The canonical frame of an HBGC-algebra  $\mathbb{HB}_{GC}$ 
is the GC-frame defined on the set of all prime filters, that is, $\mathcal{F}_{X(L)} = (X(L),\subseteq,R)$. 
Similarly, for a frame $\mathcal{F} = (X,\leq,R)$, its complex HBGC-algebra is 
\[
\mathbb{HB}_{\rm GC}(\mathcal{F}) = (\mathcal{T}_\leq,\cup,\cap,\to,\gets, {^\UP}, {^\Down},\emptyset,X).
\]
It is clear that the complex algebra $\mathbb{HB}_{\rm GC}(\mathcal{F})$ 
determined by  any GC-frame $\mathcal{F}$ is an HBGC-algebra; recall that operation
$\gets$ for Alexandrov topologies is given in Proposition~\ref{Prop:Rauszer}.

We can now write the following representation theorem for Heyting--Brouwer algebras. The proof is
obvious, because for the map $h$ defined in the proof of Proposition~\ref{Prop:Embedding}, we have
\[ h(a \gets b) = h(a) \gets h(b) . \]
This can be proved similarly as in case of the operation $\to$.

\begin{proposition} \label{Prop:EmbeddingHeytingBrower}
Let\/ $\mathbb{HB}_{\rm GC} = (L, \vee, \wedge, \to, \gets, f, g, 0, 1)$ be an HBGC-algebra. Then, there exists a GC-frame 
$\mathcal{F} = (X,\leq,R)$ such that  $\mathbb{HB}_{\rm GC}$ is isomorphic
to a subalgebra of $\mathbb{HB}_{\rm GC}(\mathcal{F})$.
\end{proposition}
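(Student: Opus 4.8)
The plan is to reuse the embedding $h(x) = \{F \in X(L) \mid x \in F\}$ from the proof of Proposition~\ref{Prop:Embedding}, taking $\mathcal{F}$ to be the canonical frame $\mathcal{F}_{X(L)} = (X(L), \subseteq, R)$. From that proposition, $h$ is already a bounded lattice embedding satisfying $h(f(x)) = h(x)^\UP$ and $h(g(x)) = h(x)^\Down$, and by Proposition~\ref{Prop:EmbeddingHeyting} (the Heyting case) we also have $h(a \to b) = h(a) \to h(b)$. Hence the only new identity to verify is $h(a \gets b) = h(a) \gets h(b)$; once this is established, $h$ respects every operation and, being injective, is the desired embedding of HBGC-algebras.

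The first step is to unfold the right-hand side. On the canonical frame the quasiorder is $\subseteq$, so specialising the co-implication of Proposition~\ref{Prop:Rauszer} to this frame gives
\[ h(a) \gets h(b) = \{ F \in X(L) \mid (\exists G \in X(L)) \ G \subseteq F, \ a \notin G, \text{ and } b \in G \}. \]
Throughout I would use the adjunction $a \gets b \leq c \iff b \leq a \vee c$, which follows at once from the definition of $a \gets b$ as the least $x$ with $b \leq a \vee x$, together with the witnessing inequality $b \leq a \vee (a \gets b)$.

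For the inclusion $h(a) \gets h(b) \subseteq h(a \gets b)$ I would argue directly: given a prime filter $G \subseteq F$ with $b \in G$ and $a \notin G$, primeness of $G$ applied to $b \leq a \vee (a \gets b) \in G$ forces $a \gets b \in G$ (since $a \notin G$), whence $a \gets b \in F$. The reverse inclusion is the substantive step and will be the main obstacle: assuming $a \gets b \in F$, I must construct a prime filter $G \subseteq F$ separating $b$ from $a$. I would pass to prime ideals, setting $K = L \setminus F$ (a prime ideal) and forming the ideal $J'$ generated by $K \cup \{a\}$, whose elements are exactly those below $k \vee a$ for some $k \in K$. The crucial claim is $b \notin J'$: if $b \leq k \vee a$ with $k \in K$, the adjunction yields $a \gets b \leq k$, so $a \gets b \in K = L \setminus F$, contradicting $a \gets b \in F$. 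Extending $J'$ to a prime ideal $J$ omitting $b$ by the order-dual of the Prime Filter Theorem and taking $G = L \setminus J$ gives the required prime filter: $K \subseteq J$ (so $G \subseteq F$), $a \in J$ (so $a \notin G$), and $b \notin J$ (so $b \in G$). This completes the identity and hence the proposition; the whole argument is the order-dual of the implication computation, with prime ideals and the Prime Ideal Theorem replacing prime filters and the Prime Filter Theorem.
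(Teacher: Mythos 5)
Your proposal is correct and takes essentially the same route as the paper: the paper also reuses the canonical-frame embedding $h$ from Proposition~\ref{Prop:Embedding} and reduces everything to the single identity $h(a \gets b) = h(a) \gets h(b)$, which it leaves unproved with the remark that it goes ``similarly'' to the $\to$ case. Your dual prime-ideal argument (forming the ideal generated by $(L \setminus F) \cup \{a\}$, checking $b$ is excluded via the adjunction $a \gets b \leq c \iff b \leq a \vee c$, and separating with the Prime Ideal Theorem) is a correct filling-in of exactly that omitted verification.
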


We say that an HBGC-algebra is \emph{complete}, if the underlying lattice is complete.
Additionally, an HBGC-algebra is \emph{weakly atomic}, if it is defined
on a weakly atomic lattice. Obviously, finite distributive lattices equipped with a 
Galois connection determine weakly atomic HBGC-algebras.

\begin{theorem} \label{Thm:RepresentationHB}
Let $\mathbb{HB}_{\rm GC} = (L, \vee, \wedge, \to, \gets, f, g, 0, 1)$ be a complete and weakly atomic 
HBGC-algebra. Then, there exists a GC-frame  $\mathcal{F} = (X,\leq,R)$ such that $\mathbb{HB}_{\rm GC}$ 
is isomorphic to $\mathbb{HB}_{\rm GC}(\mathcal{F})$.
\end{theorem}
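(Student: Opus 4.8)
The plan is to reduce the statement to the already-proved spatial case, Theorem~\ref{Thm:Representation}, by first observing that completeness together with weak atomicity forces the underlying lattice $L$ to be spatial. Since $L$ is a complete Heyting--Brouwer algebra, it satisfies both \eqref{Eq:JID} and (MID), as recalled in the preliminaries. Combined with the hypotheses that $L$ is complete and weakly atomic, this is precisely condition~(e) of Remark~\ref{Rem:AlexandrovIsomorphism}. By the equivalence of the conditions listed there, $L$ then also satisfies condition~(d), namely that $L$ is spatial and satisfies \eqref{Eq:JID}. Hence a complete, weakly atomic HBGC-algebra, with its co-implication temporarily forgotten, is a spatial HGC-algebra.

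I would then apply Theorem~\ref{Thm:Representation} to the spatial HGC-algebra $(L,\vee,\wedge,\to,f,g,0,1)$. This produces the GC-frame $\mathcal{F} = (\mathcal{J},\triangleleft,R)$ on the set $\mathcal{J}$ of completely join-irreducible elements of $L$, with $R$ given by $j \, R \, k \iff j \leq f(k)$, together with an isomorphism $\Phi \colon L \to \mathcal{T}_\triangleleft$. From that theorem, $\Phi$ already preserves $\vee$, $\wedge$, $0$, $1$, and $\to$, and satisfies $\Phi(f(x)) = \Phi(x)^\UP$ and $\Phi(g(x)) = \Phi(x)^\Down$. Because $\mathcal{T}_\triangleleft$ is an Alexandrov topology, Proposition~\ref{Prop:Rauszer} endows it with a co-implication $\gets$, so the complex algebra $\mathbb{HB}_{\rm GC}(\mathcal{F})$ is a genuine HBGC-algebra.

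The only remaining task is to check that $\Phi$ preserves the co-implication, i.e.\ $\Phi(x \gets y) = \Phi(x) \gets \Phi(y)$ for all $x,y \in L$. Here I would mirror the argument that Theorem~\ref{Thm:Representation} uses for $\to$: in any Heyting--Brouwer algebra the element $a \gets b$ is the least $x$ satisfying $b \leq a \vee x$, so it depends only on the underlying order and joins. As $\Phi$ is an order-isomorphism between $(L,\leq)$ and $(\mathcal{T}_\triangleleft,\subseteq)$ that preserves arbitrary joins, it must carry the least witness of $x \gets y$ computed in $L$ to the least witness computed in $\mathcal{T}_\triangleleft$, which is exactly $\Phi(x) \gets \Phi(y)$. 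Together with the properties inherited from Theorem~\ref{Thm:Representation}, this shows $\Phi$ is an isomorphism of HBGC-algebras, with $\mathcal{F} = (\mathcal{J},\triangleleft,R)$ as the desired frame.

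I expect the genuinely new content to lie entirely in the spatiality reduction of the first paragraph; the preservation of $\gets$ is a routine order-theoretic counterpart of the treatment already given for $\to$. The main point to verify carefully is that the co-implication supplied by Proposition~\ref{Prop:Rauszer} on $\mathcal{T}_\triangleleft$ indeed coincides with the order-determined least witness, so that the order-isomorphism argument applies without gaps — but this holds because an Alexandrov topology is a complete Heyting--Brouwer algebra whose $\gets$ is, by construction, the order-theoretic co-implication.
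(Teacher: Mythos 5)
Your proposal is correct and follows essentially the same route as the paper: the paper likewise invokes Remark~\ref{Rem:AlexandrovIsomorphism} via condition~(e) and then states that the isomorphism ``can be proved similarly as in case of Theorem~\ref{Thm:Representation}''. Your write-up merely makes explicit what the paper leaves implicit, namely the passage from condition~(e) to spatiality (condition~(d)) so that Theorem~\ref{Thm:Representation} applies verbatim to the HGC-reduct, and the observation that $\gets$, being order-determined, is automatically preserved by the order-isomorphism $\Phi$.
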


\begin{proof} Let $\mathbb{HB}_{\rm GC} = (L,\vee, \wedge, \to, \gets, f, g, 0, 1)$ be a complete and 
weakly atomic HBGC-algebra. Because $L$ is a complete lattice, it satisfies (JID) and (MID). 
Additionally, $L$ is  weakly atomic by assumption, and hence $L$ is isomorphic to some 
Alexandov topology by Remark~\ref{Rem:AlexandrovIsomorphism}, and, as we have noted, 
Alexandov topologies determine complete weakly atomic HBGC-algebras. That  $\mathbb{HB}_{\rm GC}$ 
is isomorphic to $\mathbb{HB}_{\rm GC}(\mathcal{F})$ can be proved similarly as in case of
Theorem~\ref{Thm:Representation}.
\end{proof}

\section*{Conclusions}

In \cite{DzJaKo10}, we introduced intuitionistic logic with a Galois connection ({\sf IntGC}) and showed the logic to be
algebraizable in terms of HGC-algebras. Additionally, we showed in \cite{DzJaKo12} that {\sf IntGC} has the finite model 
property and thus is decidable. In this work we presented representation theorem for HGC-algebras, and also extending
the theorem for HBGC-algebras.
The class of HBGC-algebras can be applied for defining the algebraic semantics for intuitionistic logic with co-implication 
and with a Galois connection in a similar way as HGC-algebras are used as the algebraic semantics for intuitionistic 
logic with a Galois connection \cite{DzJaKo10}.
In fact, the Representation Theorems~\ref{Thm:Representation} and \ref{Thm:RepresentationHB} can be read as statements saying that 
Alexandrov topologies with rough sets approximations  provide semantics for  intuitionistic logic with Galois connections 
and semantics for intuitionistic logic with co-implication and with Galois connections.

In \cite{Jarvinen2008}, we studied classical logic with a Galois connection. We showed that if an additional
pair of Galois connection is added and then these two Galois connection pairs are interlinked
with De~Morgan-type of connections, this logic is the minimal tense logic {\sf K$_t$}. In the future, our aim
is to study extending {\sf IntGC} similarly with an another Galois connection pair and linking these two Galois connections,
for instance, by axioms introduced by G.~Fischer Servi \cite{FishServ84}.


\begin{thebibliography}{10}

\bibitem{Alex37}
P.~Alexandroff.
\newblock Diskrete r{\"a}ume.
\newblock {\em Matemati{\v{c}}eskij Sbornik}, 2:501--518, 1937.

\bibitem{BaDw74}
R.~Balbes and Ph.~Dwinger.
\newblock {\em {Distributive Lattices}}.
\newblock {University of Missouri Press}, Columbia, Missouri, 1974.

\bibitem{Belohlavek2009}
R.~Belohlavek.
\newblock Optimal triangular decompositions of matrices with entries from
  residuated lattices.
\newblock {\em International Journal of Approximate Reasoning}, 50:1250--1258,
  2009.

\bibitem{Birk37}
G.~Birkhoff.
\newblock Rings of sets.
\newblock {\em Duke Mathematical Journal}, {3}:443--454, 1937.

\bibitem{DaPr02}
B.~A. Davey and H.~A. Priestley.
\newblock {\em Introduction to Lattices and Order}.
\newblock Cambridge University Press, 2nd edition, 2002.

\bibitem{DzJaKo10}
W.~Dzik, J.~J{\"a}rvinen, and M.~Kondo.
\newblock Intuitionistic propositional logic with {G}alois connections.
\newblock {\em Logic Journal of the IGPL}, 18:837--858, 2010.

\bibitem{DzJaKo12}
W.~Dzik, J.~J{\"a}rvinen, and M.~Kondo.
\newblock Intuitionistic logic with a {G}alois connection has the finite model
  property.
\newblock {\em Logic Journal of the IGPL}, 21:199--204, 2013.

\bibitem{FishServ84}
G.~{Fischer Servi}.
\newblock Axiomatizations for some intuitionistic modal logics.
\newblock {\em Rendiconti del Seminario Matematico dell’ Universit{\`a}
  Politecnica di Torino}, 42:179--194, 1984.

% \bibitem{gierz2003continuous}
% G.~Gierz.
% \newblock {\em Continuous lattices and domains}, volume~93.
% \newblock Cambridge University Press, 2003.

\bibitem{Grat98}
G.~Gr{\"a}tzer.
\newblock {\em General Lattice Theory}.
\newblock {Birkh{\"a}user}, Basel, 2nd edition, 1998.

\bibitem{Hooshmandasl2013}
M.~Hooshmandasl, A.~Karimi, M.~Almbardar, and B.~Davvaz.
\newblock Axiomatic systems for rough set-valued homomorphisms of associative
  rings.
\newblock {\em International Journal of Approximate Reasoning}, 54:297 -- 306,
  2013.

\bibitem{Jarv07}
J.~J{\"a}rvinen.
\newblock Lattice theory for rough sets.
\newblock {\em Transactions on Rough Sets}, {VI}:400--498, 2007.

\bibitem{Jarvinen2008}
J.~J{\"a}rvinen, M.~Kondo, and J.~Kortelainen.
\newblock Logics from Galois connections.
\newblock {\em International Journal of Approximate Reasoning}, 49:595--606,
  2008.

\bibitem{jonsson1951boolean}
B.~J{\'o}nsson and A.~Tarski.
\newblock Boolean algebras with operators. {Part I}.
\newblock {\em American Journal of Mathematics}, 73(4):891--939, 1951.

\bibitem{Ma2013}
J.-M. Ma and W.-X. Zhang.
\newblock Axiomatic characterizations of dual concept lattices.
\newblock {\em International Journal of Approximate Reasoning}, 54:690--697,
  2013.

\bibitem{OrlPaw84}
E.~Or{\l}owska and Z.~Pawlak.
\newblock Representation of nondeterministic information.
\newblock {\em Theoretical Computer Science}, 29:27--39, 1984.

\bibitem{Orlowska07}
E.~Or{\l}owska and I.~Rewitzky.
\newblock Discrete dualities for {H}eyting algebras with operators.
\newblock {\em Fundamenta Informaticae}, 81:275--295, 2007.

\bibitem{Pawl82}
Z.~Pawlak.
\newblock Rough sets.
\newblock {\em International Journal of Computer and Information Sciences},
  11:341--356, 1982.

\bibitem{Rauszer74}
C.~Rauszer.
\newblock Semi-{B}oolean algebras and their applications to intuitionistic
  logic with dual operations.
\newblock {\em Fundamenta Mathematicae}, 83:219--249, 1974.

\bibitem{Shen2013}
L.~Shen and D.~Zhang.
\newblock The concept lattice functors.
\newblock {\em International Journal of Approximate Reasoning}, 54:166--183,
  2013.

\end{thebibliography}
\end{document}